\newcommand{\aspas}[1]{``{#1}''}
\newtheorem{theorem}{Theorem}[section]
\newtheorem{example}[theorem]{Example}
\newtheorem{proposition}[theorem]{Proposition}
\theoremstyle{definition}
\newtheorem{definition}[theorem]{Definition}
\newtheorem{remark}[theorem]{Remark}
\newtheorem{corollary}[theorem]{Corollary}
\numberwithin{equation}{section}
\begin{document}


\vspace{0.5in}

\renewcommand{\bf}{\bfseries}
\renewcommand{\sc}{\scshape}
\vspace{0.5in}

\title[Injective category number]%
{The injective category number on continuous maps}

\author[C. A. I. Zapata---R. Rabanal]{Cesar A. Ipanaque Zapata---Roland Rabanal}
\address{\textsc{Cesar A. Ipanaque Zapata}\\
Departamento de Matem\'atica, IME\\
Universidade de S\~ao Paulo\\
Rua do Mat\~ao 1010 CEP: 05508-090 S\~ao Paulo-SP, Brazil}
\email{cesarzapata@usp.br}

\thanks{The first author would like to thank grant\#2023/16525-7 and grant\#2022/16695-7, S\~{a}o Paulo Research Foundation (\textsc{fapesp}) for financial support.}

\address{\textsc{R. Rabanal}\\
Departamento de Ciencias,\\
Pontificia Universidad Cat\'{o}lica del Per\'{u}\\
Av. Universitaria 1801, San Miguel, 15088,  Per\'{u}}
\email{rrabanal@pucp.edu.pe}%
\thanks{The second author was partially supported by \textsc{pucp--Peru}~(\textsc{dgi}: 2023--E--0020).}

\subjclass[2020]{Primary 55M20, 55M30, 55R80; Secondary 57M10, 54D10, 54D15.}                                    %

\keywords{(Locally) injective map, immersion, covering, injective category number, multiple points, cohomology, configuration spaces, ($2$-th) Borsuk-Ulam property, $2$-th index, index of an involution}

\begin{abstract} We introduce the concept of injective category number $\text{IC}(f)$ for a continuous map $f\colon X\to~Y$, and present fundamental results concerning this numerical invariant. The value $\text{IC}(f)$ quantifies the \aspas{complexity} or \aspas{categorical structure} underlying  the question: under what conditions is $f$ injective? More precisely, $\text{IC}(f)$ is the smallest positive integer $\ell$ such that $X$ can be covered by $\ell$ open subsets $U_1,\ldots,U_\ell$, with each restriction map $f_{\mid U}:U\to Y$ being injective. For instance, we examine the behaviour of $\text{IC}(f)$ under pullbacks and compositions of maps. In addition, we provide a cohomological lower bound for $\text{IC}(f)$. When  $f$ has a finite number of multiple points, we express $\text{IC}(f)$ in terms of these points of non-injectivity. In the case that $f$ is the quotient map $\mathfrak{q}^X:X\to X/G$, where $X$ is a  metric free $G$-space, we provide a lower bound for the injective category of $\mathfrak{q}^X$ in terms of the $2$-th index, $\text{ind}_2(X,G)$. When $G=\mathbb{Z}_2$, this lower bound is shown to be sharp. These results link a classical problem in Borsuk-Ulam theory to contemporary research developments in the study of injective category numbers. 
\end{abstract}
\maketitle


\section{Introduction}\label{secintro}%
In this paper ``space'' means a topological space, and by a ``map'' we will always mean a continuous map. 

\medskip Let $M$ and $N$ be $C^\infty$ manifolds and $f\colon M\to N$ be an immersion, that is, $f$ is $C^\infty$ and its differential $d f_p\colon T_pM \to T_{f(p)}N$ is injective, for any $p\in M$.
Consequently,  $f$ is locally injective: in other words, for each $p\in M$, there exists an open neighbourhood $U$ of $p$ such that the restriction map $f_{|U}:U\to N$ is injective, see \cite[p. 15]{guillemin1974differential}.
By other hand, it is well known that there are immersions which are not (globally) injective, even in the case of local diffeomorphisms between Euclidean spaces, \cite{MR1658243,MR1940235,MR2129721,MR2344172,MR1249211,MR2226492}.

\medskip Hence, it is natural to stay the following question: When is a locally injective map injective?

\medskip Motivated by this question we introduce the notion of injective category number of a map $f:X\to Y$, denoted by $\text{IC}(f)$ (Definition~\ref{defn:injective-category}), together with fundamental results concerning this numerical invariant. The value $\text{IC}(f)$ quantifies the \aspas{complexity} or \aspas{category} of the question above. More precisely, $\text{IC}(f)$ is the smallest positive integer $\ell$ such that $X$ can be covered by $\ell$ open subsets $U_1,\ldots,U_\ell$, with each restriction map $f_{\mid U}:U\to Y$ being injective. For instance, we have that $\text{IC}(f)=1$ if and only if $f$ is injective. Furthermore, suppose $\mathrm{IC}(f)=n$, then, for any cover $U_1,\ldots,U_{\ell}$ of $X$ by $\ell$ open sets, with $\ell<n$, there is at least one set $U_j$ containing two points $x\neq x'$ such that $f(x)=f(x')$.

\medskip The main results of this work are:
\begin{itemize}
    \item Introduction of the injective category number of a map $f:X\to Y$.
    \item Theorem~\ref{thm:pullback} shows that the injective category number is well behaved under pullbacks.
    \item In the case that $f$ admits a finite number of multiple points, we present the injective category of $f$ in terms of its multiple points (Theorem~\ref{thm:finite-multiple-points}).
    \item Theorem~\ref{thm:composite} presents inequalities of IC under composition.
    \item In the case that $f$ is a surjective map between manifolds of the same dimension, Theorem~\ref{thm:cohomo-ic} presents a cohomological lower bound for $\mathrm{IC}(f)$. In addition, Theorem~\ref{between-euclidean} presents a cohomological obstruction for the injectivity of a map with codomain an Euclidean space.
    \item In the case that $f$ is the quotient map $\mathfrak{q}^X:X\to X/G$, where $X$ is a metric free $G$-space, Theorem~\ref{thm:BU-implies-IC} presents a lower bound for the injective category of $\mathfrak{q}^X$ in terms of the $2$-th index $\text{ind}_2(X,G)$. In the case $G=\mathbb{Z}_2$, this lower bound is achieved.
\end{itemize} 

\medskip The present paper is organized in three sections, briefly described as follows.
In Section~\ref{sec:injective-category}, we introduce the notion of injective category number $\text{IC}(f)$ (Definition~\ref{defn:injective-category}) together with its fundamental results (Theorem~\ref{thm:pullback}, Theorem~\ref{thm:finite-multiple-points}, Theorem~\ref{thm:composite}). In addition, we present several examples of injective category number (Example~\ref{exam:immersion-r-r}, Example~\ref{exam:twists-circle}, Example~\ref{exam:rose}).
Furthermore, we present a cohomological lower bound for the injective category of a surjective map between topological manifolds with the same dimension (Theorem~\ref{thm:cohomo-ic}). In addition, we present a cohomological obstruction for the injectivity of a map with codomain an Euclidean space (Theorem~\ref{between-euclidean}).
In Section~\ref{sec:involution}, we present a review of the $2$-th Borsuk-Ulam property and $2$-th index of a free $G$-space, and provide a lower bound for the injective category number of the quotient map in terms of the $2$-th index. In the case $G=\mathbb{Z}_2$, this lower bound is achieved  (Theorem~\ref{thm:BU-implies-IC}). As a direct application, we show that $\mathrm{IC}(S^n\to\mathbb{R}P^n)=n+2$ (Example~\ref{exam:sphere}).

\section{Injective category number}\label{sec:injective-category}
In this section we introduce the notion of injective category number together with its fundamental results and several examples. 
Furthermore, we present a cohomological lower bound for the injective category of a surjective map between topological manifolds of the same dimension. In addition, we provide a cohomological obstruction for the injectivity of a map with codomain an Euclidean space. 

\subsection{Definitions and Examples} Let $f:X\to Y$ be a map. We say that $f$ is \textit{locally injective} if, for each point $x\in X$, there exists an open subset $U\subset X$ such that the restriction map $f_{\mid U}:U\to Y$ is injective. Equivalently, there exists an open cover $\{U_\lambda\}_{\lambda\in\Lambda}$ of $X$ such that each restriction map $f_{\mid U_\lambda}:U_\lambda\to Y$ is injective. Observe that if $f$ is locally injective, then $f^{-1}(y)$ has the discrete topology for each $y\in f(X)$.

\begin{example}
  Let $M$ and $N$ be $C^\infty$ manifolds and $f:M\to N$ be an immersion. Then $f$ is locally injective, see \cite[p. 15]{guillemin1974differential}\footnote{In contrast, any submersion $g:P\to Q$, between $C^\infty$ manifolds with $\dim(P)>\dim(Q)$, is not locally injective.}.  
\end{example} 

On the other hand, we have the following example.

\begin{example}\label{exam:non-locally-injective}
Consider the map $f:S^1\to\mathbb{R}^2$ given by \[f(x,y)=\begin{cases}
    (x,y),&\hbox{ if $y\geq 0$;}\\
    (x,-y),&\hbox{ if $y\leq 0$.}
\end{cases}\] Notice that for any open neighbourhood $U$ of $(1,0)$ we have that the restriction map $f_{\mid U}:U\to \mathbb{R}^2$ is not injective. 
Hence, such map $f$ is not locally injective.
\end{example}

As shown in the introduction, it is natural to stay the following question: When is a locally injective map injective? Motivated by this question we introduce the notion of injective category number.

\begin{definition}[Injective Category Number]\label{defn:injective-category}
Let $f:X\to Y$ be a map. The \textit{injective category number} of $f$, denoted by $\text{IC}(f)$, is the smallest positive integer $\ell$ such that there are open subsets $U_1,\ldots,U_\ell\subset X$ that $X=U_1\cup\cdots\cup U_\ell$ and each restriction map $f_{\mid U_j}:U_j\to Y$ is injective. We set $\text{IC}(f)=\infty$ if no such integer $\ell$ exists.     
\end{definition}

\medskip Note that, $\text{IC}(f)=1$ if and only if $f$ is injective. 
Furthermore, if $\text{IC}(f)<\infty$, then $f$ is locally injective. 

\begin{remark}\label{rem:locally-injective-ic-finite}
Let $f:X\to Y$ be a locally injective map, i.e., there exists an open cover $\{U_\lambda\}_{\lambda\in\Lambda}$ of $X$ such that each restriction map $f_{\mid U_\lambda}:U_\lambda\to Y$ is injective. In the case that $X$ is compact we have that there exist $\lambda_1,\ldots,\lambda_k\in\Lambda$ such that $\{U_{\lambda_i}\}_{i=1}^{k}$ is a cover of $X$, and thus $\text{IC}(f)\leq k<\infty$. The other implication is not true. For example, for $m\leq n$, the canonical immersion $i:\mathbb{R}^m\to\mathbb{R}^n$, $i(x_1,\ldots,x_m)=(x_1,\ldots,x_m,0,\ldots,0)$, satisfies $\text{IC}(i)=1$ and $\mathbb{R}^m$ is not compact. Also, the immersion $f:\mathbb{R}\setminus\{0\}\to\mathbb{R}$, given by  $f(x)=x^2$, satisfies $\text{IC}(f)=2$.  
\end{remark} 

We also have the following examples.  %

\begin{example}\label{exam:immersion-r-r}
    Any immersion $f:\mathbb{R}\to\mathbb{R}$ is injective and hence $\mathrm{IC}(f)=1$. Since $f$ is an immersion, we have that $f'(x)\neq 0$ for any $x\in\mathbb{R}$. Then, by the Mean Value Theorem, we obtain that $f'(x)> 0$ for any $x\in\mathbb{R}$ or $f'(x)< 0$ for any $x\in\mathbb{R}$ (here we use that $f'$ is a continuous map). Thus, $f$ is strictly increasing or decreasing and hence $f$ is injective.
\end{example}

\begin{example}\label{exam:twists-circle}
    Consider a map $f$ that twists the circle into a figure eight, see Figure~\ref{fig:eight}. This is an immersion of $S^1$ into $\mathbb{R}^2$ which is not injective (hence $\mathrm{IC}(f)\geq 2$). Here, $f^{-1}(y)=\{p_1,p_2\}$. We can find open subsets $U_1,U_2\subset S^1$ such that  for each $i=1,2$, $U_i\cap f^{-1}(y)=\{p_i\}$ (hence the restriction map $f_{\mid U_i}:U_i\to\mathbb{R}^2$ is injective) and $U_1\cup U_2=S^1$, see Figure~\ref{fig:eight-covering}. Then $\mathrm{IC}(f)\leq 2$ and therefore $\mathrm{IC}(f)= 2$. 
\end{example}

\begin{figure}[htb] 
 \centering
 \begin{tikzpicture}
 \node at (3.5,0.5) {$f$};
\Vertex[x=2,y=0,size=0.2,label=$p_2$,position=right,color=black]{F};
\Vertex[x=-2,y=0,size=0.2,label=$p_1$,position=left,color=black]{G};
\Vertex[x=5,y=0,size=0.2,label=$y$,position=below,color=black]{H};
\draw[->, >=latex](2.8,0)--(4.2,0); 
\draw[thick] (2,0) arc[start angle=0, end angle=360, radius=2];
\draw[thick] (6,1) arc[start angle=0, end angle=360, radius=1];
\draw[thick] (6,-1) arc[start angle=0, end angle=360, radius=1];
\end{tikzpicture}
 \caption{Figure eight.}
 \label{fig:eight}
\end{figure}

\begin{figure}[htb] 
 \centering
 \begin{tikzpicture}
 \node at (3.5,0.5) {$f$};
 \node at (-2.7,0) {$U_1$};
  \node at (2.7,0.5) {$U_2$};
\Vertex[x=2,y=0,size=0.2,label=$p_2$,position=left,color=black]{F};
\Vertex[x=-2,y=0,size=0.2,label=$p_1$,position=right,color=black]{G};
\Vertex[x=5,y=0,size=0.2,label=$y$,position=below,color=black]{H};
\draw[->, >=latex](2.8,0)--(4.2,0); 
\draw[thick] (2,0) arc[start angle=0, end angle=360, radius=2];
\draw[thick] (6,1) arc[start angle=0, end angle=360, radius=1];
\draw[thick] (6,-1) arc[start angle=0, end angle=360, radius=1];
\draw[thick, dashed] (1.6,1.6) arc[start angle=45, end angle=315, radius=2.3];
\draw[thick, dashed] (-1.8,-1.8) arc[start angle=225, end angle=495, radius=2.5];
\end{tikzpicture}
 \caption{The open subsets $U_1,U_2\subset S^1$.}
 \label{fig:eight-covering}
\end{figure}

\subsection{Under pullbacks} Note that, if the following diagram

\begin{eqnarray*}
\xymatrix{ \widetilde{M} \ar[rr]^{\varphi} \ar[dr]_{\widetilde{f}} & & M \ar[dl]^{f}  \\
        &  N & }
\end{eqnarray*}
commutes, where $\varphi$ is an injective map, then $\mathrm{IC}\hspace{.1mm}(\widetilde{f})\leq \mathrm{IC}\hspace{.1mm}(f)$. Suppose that $U\subset M$ is an open subset and the restriction map $f_{\mid}:U\to N$ is injective. Then, $V=\varphi^{-1}(U)\subset \widetilde{M}$ is an open subset and the restriction map $\widetilde{f}_{\mid}:V\to N$ is injective (note that $\widetilde{f}_{\mid}=f_{\mid}\circ\varphi_{\mid}$, where $\varphi_{\mid}:\varphi^{-1}(U)\to U$ is the restriction map which is a bijection).

\medskip Also, for any map $f:M\to N$ and any continuous map $\psi:X\to N$, note that any open subset $U\subset M$ such that $f_{\mid}:U\to N$ is injective induces an open subset $\pi_2^{-1}(U)\subset X\times_N M$ such that the restriction map ${\pi_1}_{\mid }:\pi_2^{-1}(U)\to X$ is injective, where $\pi_1:X\times_N M\to X$ is the canonical pullback, $X\times_N M=\{(x,m)\in X\times M:~\psi(x)=f(m)\}$ and $\pi_i$ is the $i$th coordinate projection. Suppose that $(x,m),(x^\prime,m^\prime)\in \pi_2^{-1}(U)$ with $\pi_1(x,m)=\pi_1(x^\prime,m^\prime)$ (hence $x=x^\prime$). Then \begin{align*}
    f(m)&=\psi(x)\\
    &=\psi(x^\prime)\\
    &=f(m^\prime).
\end{align*} Note that $m,m^\prime\in U$ and thus $m=m^\prime$ (here we use that $f_{\mid}:U\to N$ is injective). Therefore $(x,m)=(x^\prime,m^\prime)$ and so ${\pi_1}_{\mid }:\pi_2^{-1}(U)\to X$ is injective.

\begin{eqnarray*}
\xymatrix{ &X\times_N M \ar[rr]^{\pi_2} \ar[d]_{\pi_1} & & M \ar[d]^{f} & \\
       &X  \ar[rr]_{\psi} & &  N & \\
      }
\end{eqnarray*} Thus, \begin{align}\label{ineq-canonical}
    \text{IC}(\pi_1)&\leq \text{IC}(f).
\end{align} 

\medskip We present that the injective category number is well behaved under pullbacks.

\begin{theorem}[Under Pullback]\label{thm:pullback}
 Let $f:M\to N$ be a map. If the following square
\begin{eqnarray*}
\xymatrix{ \widetilde{M} \ar[r]^{\,\,} \ar[d]_{\widetilde{f}} & M \ar[d]^{f} & \\
       \widetilde{N} \ar[r]_{} &  N &}
\end{eqnarray*}
is a pullback, then $\mathrm{IC}\hspace{.1mm}(\widetilde{f})\leq \mathrm{IC}\hspace{.1mm}(f)$.    
\end{theorem}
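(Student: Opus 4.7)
The plan is to reduce the general pullback to the canonical fibred product already analysed in the paragraphs preceding the theorem, and then quote inequality (\ref{ineq-canonical}). By the universal property of pullbacks, the given square is canonically homeomorphic to the standard model $\widetilde{N}\times_{N}M=\{(\tilde{n},m)\in\widetilde{N}\times M:\psi(\tilde{n})=f(m)\}$, where $\psi\colon\widetilde{N}\to N$ is the bottom horizontal arrow; under this identification $\widetilde{f}$ becomes the first coordinate projection $\pi_1$. Since $\mathrm{IC}(\cdot)$ depends only on the source and the map up to homeomorphism over the codomain, inequality (\ref{ineq-canonical}) gives $\mathrm{IC}(\widetilde{f})=\mathrm{IC}(\pi_1)\leq \mathrm{IC}(f)$.

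If a reader prefers a self-contained argument that does not invoke the preceding paragraph, I would proceed directly. If $\mathrm{IC}(f)=\infty$ the inequality is vacuous, so assume $\ell:=\mathrm{IC}(f)<\infty$ and fix an open cover $U_1,\ldots,U_\ell$ of $M$ with every restriction $f_{\mid U_i}\colon U_i\to N$ injective. Write $g\colon\widetilde{M}\to M$ for the top horizontal arrow and set $V_i:=g^{-1}(U_i)$. Each $V_i$ is open in $\widetilde{M}$, and since every $\tilde{x}\in\widetilde{M}$ satisfies $g(\tilde{x})\in U_i$ for some $i$, the family $\{V_i\}_{i=1}^{\ell}$ covers $\widetilde{M}$.

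The core step is to verify that each $\widetilde{f}_{\mid V_i}$ is injective. Suppose $\tilde{x},\tilde{x}'\in V_i$ with $\widetilde{f}(\tilde{x})=\widetilde{f}(\tilde{x}')$. Commutativity of the square yields
$$f(g(\tilde{x}))=\psi(\widetilde{f}(\tilde{x}))=\psi(\widetilde{f}(\tilde{x}'))=f(g(\tilde{x}')),$$
and because $g(\tilde{x}),g(\tilde{x}')\in U_i$ with $f_{\mid U_i}$ injective, one concludes $g(\tilde{x})=g(\tilde{x}')$. Thus $\tilde{x}$ and $\tilde{x}'$ have the same image under both $g$ and $\widetilde{f}$, and the universal property of the pullback forces $\tilde{x}=\tilde{x}'$. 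Consequently $\mathrm{IC}(\widetilde{f})\leq\ell=\mathrm{IC}(f)$.

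The only genuine subtlety, which could be called the main obstacle, is recognising that the hypothesis \emph{is a pullback} (not necessarily the set-theoretic fibred product) suffices: this is exactly where the universal property is used to conclude $\tilde{x}=\tilde{x}'$ from the pair of equalities $g(\tilde{x})=g(\tilde{x}')$ and $\widetilde{f}(\tilde{x})=\widetilde{f}(\tilde{x}')$. Everything else is routine bookkeeping with preimages and compositions.
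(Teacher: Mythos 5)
Your proposal is correct and follows essentially the same route as the paper: the first paragraph is precisely the paper's argument (identify the pullback with the canonical model $\widetilde{N}\times_N M$ via the universal property and invoke inequality~(\ref{ineq-canonical})), while your self-contained second argument merely unwinds that same reduction, pulling back the open cover along the top arrow and using injectivity of the comparison map $\widetilde{M}\to\widetilde{N}\times_N M$ to conclude. Both versions are sound; no gaps.
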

\begin{proof}
Since $\widetilde{f}$ is a pullback, we have the following commutative triangle
\begin{eqnarray*}
\xymatrix{ \widetilde{N}\times_N M \ar[rr]^{\varphi} \ar[dr]_{\pi_1} & & \widetilde{M} \ar[dl]^{\widetilde{f}}  \\
        &  \widetilde{N} & }
\end{eqnarray*} where $\varphi$ is a homeomorfismo and thus $\text{IC}(\pi_1)\leq\text{IC}(\widetilde{f})$. Similarly, since $\pi_1$ is the canonical pullback, we have the following commutative triangle
\begin{eqnarray*}
\xymatrix{ \widetilde{M} \ar[rr]^{\varphi^{-1}} \ar[dr]_{\widetilde{f}} & &  \widetilde{N}\times_N M\ar[dl]^{\pi_1}  \\
        &  \widetilde{N} & }
\end{eqnarray*} and thus $\text{IC}(\widetilde{f})\leq\text{IC}(\pi_1)$. Hence, the equality $\text{IC}(\widetilde{f})=\text{IC}(\pi_1)$ holds. By the inequality~(\ref{ineq-canonical}), we obtain $\mathrm{IC}\hspace{.1mm}(\widetilde{f})\leq \mathrm{IC}\hspace{.1mm}(f)$.
\end{proof}

Let $f:M\to N$ be a map and $\varnothing\neq A\subset N$. Consider the restriction map $f_{\mid}:f^{-1}(A)\to A$. A direct consequence of  Theorem~\ref{thm:pullback} is the following statement. 

\begin{proposition}\label{prop:restriction}
  Let $f:M\to N$ be a map and $\varnothing\neq A\subset N$. We have \[\mathrm{IC}(f_{\mid})\leq\mathrm{IC}(f).\]  
\end{proposition}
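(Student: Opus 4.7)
The plan is to exhibit the restriction map as a pullback and invoke Theorem~\ref{thm:pullback} directly. Writing $\iota\colon A\hookrightarrow N$ for the inclusion and $\iota'\colon f^{-1}(A)\hookrightarrow M$ for the corresponding inclusion of preimages, I would consider the commutative square
\begin{eqnarray*}
\xymatrix{ f^{-1}(A) \ar[r]^{\,\,\iota'} \ar[d]_{f_{\mid}} & M \ar[d]^{f} \\
       A \ar[r]_{\iota} &  N}
\end{eqnarray*}
which commutes by the very definition of $f_{\mid}$.

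The first step is to verify that this square is a pullback. The canonical pullback is $A\times_N M=\{(a,m)\in A\times M : \iota(a)=f(m)\}=\{(a,m)\in A\times M : a=f(m)\}$. The map $\Phi\colon f^{-1}(A)\to A\times_N M$ given by $m\mapsto (f(m),m)$ is continuous and has inverse $(a,m)\mapsto m$, which is the restriction of the second projection $\pi_2\colon A\times M\to M$ to $A\times_N M$; hence $\Phi$ is a homeomorphism identifying $f^{-1}(A)$ with $A\times_N M$. Under this identification, $f_{\mid}$ corresponds to the first projection $\pi_1$ and $\iota'$ corresponds to $\pi_2$, so the square above is indeed (isomorphic to) the pullback square.

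With the pullback property in hand, the conclusion $\mathrm{IC}(f_{\mid})\leq\mathrm{IC}(f)$ is immediate from Theorem~\ref{thm:pullback}. There is no serious obstacle here; the only point requiring a moment of care is the check that $\Phi$ is a homeomorphism, which is routine since $f^{-1}(A)$ carries the subspace topology from $M$ and $A\times_N M$ carries the subspace topology from $A\times M$, and the relevant bijections are restrictions of continuous maps.
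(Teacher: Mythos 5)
Your argument is correct and is exactly the route the paper intends: the paper states Proposition~\ref{prop:restriction} as a direct consequence of Theorem~\ref{thm:pullback} without spelling out the verification, and your identification of $f^{-1}(A)$ with $A\times_N M$ supplies precisely that missing routine check.
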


Proposition~\ref{prop:restriction} implies the following remark.

\begin{remark}\label{rem:finteic-impliy-finitepre}
Let $f:X\to Y$ be a locally injective map. Observe that $|f^{-1}(y)|=\mathrm{IC}(f_{\mid}:f^{-1}(y)\to\{y\})$ because $f^{-1}(y)$ has the discrete topology. If $\mathrm{IC}(f)<\infty$, then $f^{-1}(y)$ is finite for any $y\in f(X)$. In fact, by Proposition~\ref{prop:restriction}, we obtain $|f^{-1}(y)|\leq \mathrm{IC}(f)$ for any $y\in f(X)$. The other implication does not hold (see Example~\ref{exam:sphere}(2) below).
\end{remark}


\subsection{Multiple points} Before to state the following statement we present the following definition. Let $f:M\to N$ be a locally injective map. We call a point $y\in f(M)$ a \textit{$k$th point} of $f$ if $f^{-1}(y)$ is a finite set with $k$ elements (recall that $f^{-1}(y)$ has the discrete topology). 
Notice that $k\geq1$. Furthermore, $f$ is injective if and only if $f$ has not $k$th points with $k\geq 2$ (that is, $f$ has only $1$th points). In this context, a $k$th point with $k\geq 2$ is called a \textit{multiple point} of $f$.   

\medskip Now, in the case that $f$ admits a finite number of multiple points, we present the injective category of $f$ in terms of its multiple points.

\begin{theorem}[Multiple Points]\label{thm:finite-multiple-points}
 Let $f:M\to N$ be a locally injective map and suppose that $y_1,\ldots,y_\ell\in f(M)$ are the only multiple points. Suppose that $M$ is a $T_1$ space. 
 Consider $k_i=| f^{-1}(y_i)|$ (that is, $y_i$ is a $k_i$th point) for each $i=1,\ldots,\ell$. Then \[\mathrm{IC}(f)=\max_{1\leq i\leq\ell}\{k_i\}.\] 
\end{theorem}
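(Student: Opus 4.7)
The plan is to establish the two inequalities separately, where $K := \max_{1 \leq i \leq \ell} k_i$.

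For the lower bound $\mathrm{IC}(f) \geq K$, I would invoke Remark~\ref{rem:finteic-impliy-finitepre}, which (as a consequence of Proposition~\ref{prop:restriction} applied to the restriction $f_{\mid}\colon f^{-1}(y_i)\to\{y_i\}$ of a locally injective map) gives $|f^{-1}(y)| \leq \mathrm{IC}(f)$ for every $y \in f(M)$. Applied to the multiple point $y_{i_0}$ realizing $k_{i_0}=K$, this immediately yields $K \leq \mathrm{IC}(f)$.

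For the upper bound $\mathrm{IC}(f) \leq K$, I would construct an explicit open cover $U_1, \ldots, U_K$ of $M$ with $f_{\mid U_j}$ injective for each $j$. Enumerate $f^{-1}(y_i) = \{x_{i,1}, \ldots, x_{i,k_i}\}$ for each $i$, and set $S := M \setminus \bigcup_{i=1}^{\ell} f^{-1}(y_i)$. Since the removed set is finite and $M$ is $T_1$, $S$ is open; moreover $f_{\mid S}$ is injective because every point of $f(S)$ is a $1$th point. For each $j \in \{1, \ldots, K\}$, let $A_j := \{x_{i,j} : k_i \geq j\}$ and $B_j := (\bigcup_i f^{-1}(y_i)) \setminus A_j$, both finite and hence closed in the $T_1$ space $M$. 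For each $x \in A_j$, using local injectivity pick an open $W_x \ni x$ on which $f$ is injective, and define $V_x := W_x \setminus ((A_j \setminus \{x\}) \cup B_j)$; this $V_x$ is open, contains $x$, satisfies $V_x \cap \bigcup_i f^{-1}(y_i) = \{x\}$, and still has $f_{\mid V_x}$ injective. Finally set $U_j := S \cup \bigcup_{x \in A_j} V_x$, which is open and whose union over $j$ covers $M$: every $x_{i,j}$ (with $j\leq k_i\leq K$) lies in $A_j \subset U_j$, and every remaining point lies in $S$.

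The main step is verifying that $f_{\mid U_j}$ is injective. I would argue by case analysis on how two points $a,b\in U_j$ with $f(a)=f(b)$ can sit: if both belong to $S$, injectivity of $f_{\mid S}$ forces $a=b$; if one belongs to $S$ and the other to some $V_x$, then the $S$-point forces $f(a)$ to be a $1$th point, while a point of $V_x\setminus\{x\}$ actually lies in $S$ (by construction of $V_x$) so this reduces to the previous case, and the only alternative $b=x$ is excluded since $f(x)=y_i$ is a multiple point; finally if $a\in V_x$ and $b\in V_{x'}$ with $x\neq x'$ in $A_j$, then $x=x_{i,j}$ and $x'=x_{i',j}$ with $i\neq i'$, so $f(x)=y_i\neq y_{i'}=f(x')$, and any $V_x\setminus\{x\}$ lies in $S$, so again one is reduced to already-handled situations.

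The principal subtlety — and the one place the $T_1$ hypothesis is genuinely used — is the need for $(A_j \setminus \{x\}) \cup B_j$ to be closed in $M$, so that $V_x$ is open; this is exactly why finiteness of the multiple points combined with $T_1$ is the right assumption. No Hausdorff separation of the preimages $\{x_{i,1},\ldots,x_{i,k_i}\}$ is required, since the construction places only one of them inside each $U_j$ and suppresses the rest via the closed avoidance set, rather than by separating neighborhoods.
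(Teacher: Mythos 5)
Your proof is correct and follows essentially the same strategy as the paper: the lower bound via Remark~\ref{rem:finteic-impliy-finitepre}, and the upper bound via a cover by $K$ open sets each meeting $\bigcup_i f^{-1}(y_i)$ in at most one point per multiple point. In fact your $U_j=S\cup\bigcup_{x\in A_j}V_x$ coincides as a set with the paper's $U_j=M\setminus\bigcup_i I^i_j$; the detour through the locally injective neighbourhoods $W_x$ is harmless but unnecessary, since openness already follows from $T_1$ and the finiteness of the removed set.
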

\begin{proof}
    For each $i\in\{1,\ldots,\ell\}$, by Remark~\ref{rem:finteic-impliy-finitepre}, we have $k_i\leq\mathrm{IC}(f)$. Hence, the inequality $\max_{1\leq i\leq\ell}\{k_i\}\leq \mathrm{IC}(f)$ always holds.  

    \par
    Now, we will check that $\mathrm{IC}(f)\leq\max_{1\leq i\leq\ell}\{k_i\}$. Suppose that $\max_{1\leq i\leq\ell}\{k_i\}=\tilde{M}$. 
    For each $i\in\{1,\ldots,\ell\}$,  assume that $f^{-1}(y_i)=\{x_1^i,\ldots,x_{k_i}^i\}$. 
    In this context, for each $i\in\{1,\ldots,\ell\}$, set \[I^i_{m}=\begin{cases}
     f^{-1}(y_i)\setminus\{x^i_m\},&\hbox{for $1\leq m\leq k_i$;}\\  
     f^{-1}(y_i),&\hbox{for $k_i< m\leq \tilde{M}$.}
    \end{cases}\] Then, for each $m\in\{1,\ldots,\tilde{M}\}$, consider \[U_m=M\setminus\bigcup_{i=1}^{\ell} I^i_m.\] Note that each $U_m$ is an open subset of $M$ (here we use that $M$ is a $T_1$ space), $M=\bigcup_{m=1}^{\tilde{M}}U_m$ and $f$ is injective over each $U_m$. Therefore, $ \mathrm{IC}(f)\leq \tilde{M}=\max_{1\leq i\leq\ell}\{k_i\}.$
\end{proof}

Theorem~\ref{thm:finite-multiple-points} implies the following statement.

\begin{corollary}\label{cor:only-type}
  Let $M$ be a $T_1$ space, and $f:M\to N$ be a locally injective map that admits only a finite number of multiple points of type $k$th. Then \[\mathrm{IC}(f)=k.\]    
\end{corollary}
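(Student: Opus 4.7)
The plan is to derive this directly from Theorem~\ref{thm:finite-multiple-points}. The hypothesis states that $f$ has only finitely many multiple points, say $y_1,\ldots,y_\ell\in f(M)$, and that each such point is of type $k$th, meaning $|f^{-1}(y_i)|=k$ for every $i\in\{1,\ldots,\ell\}$. Since $M$ is $T_1$ and $f$ is locally injective, all hypotheses of Theorem~\ref{thm:finite-multiple-points} are in force, so one simply substitutes $k_i=k$ into the formula $\mathrm{IC}(f)=\max_{1\leq i\leq\ell}\{k_i\}$ to conclude that $\mathrm{IC}(f)=k$.

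The only matter requiring a brief comment is the degenerate case in which $\ell=0$, that is, when $f$ has no multiple points at all (vacuously of type $k$th). In that situation $f$ is injective, so $\mathrm{IC}(f)=1$, and the statement should be read with the tacit understanding that $\ell\geq 1$, or equivalently with the convention $k=1$ in the vacuous case. No obstacle beyond the invocation of the previous theorem is anticipated; the corollary is essentially a specialisation of it to the constant-multiplicity setting.
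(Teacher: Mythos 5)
Your proposal is correct and is exactly the argument the paper intends: the corollary is stated as an immediate specialisation of Theorem~\ref{thm:finite-multiple-points} with $k_i=k$ for all $i$, so the maximum is $k$. Your remark on the vacuous case $\ell=0$ (where the statement only makes sense under the convention $k=1$, since an injective map has $\mathrm{IC}(f)=1$) is a reasonable clarification but does not change the substance.
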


\begin{example}\label{exam:rose}
   Let $f:S^1\to\mathbb{R}^2$ be an immersion (and, of course, it is a locally injective map) of the circle in the plane with a single multiple point of type $k$th, see Figure~\ref{fig:5-pet}. Then, by Corollary~\ref{cor:only-type}, we have that $\mathrm{IC}(f)=k$. 
\begin{figure}[htb] 
 \centering
 \begin{tikzpicture}
\pgfmathsetmacro\R{sqrt(7/pi)}   
\draw[smooth] plot[domain=0:36*5,samples=200] (\x:{\R*cos(5*\x)}); 
\end{tikzpicture}
 \caption{Immersion of the circle in the plane with a single multiple point of type $5$th.}
 \label{fig:5-pet}
\end{figure}
\end{example}

\subsection{Under composition}
Let $f:M\to N$ and $g:N\to P$ be locally injective maps. We find that the composition $g\circ f:M\to P$ is also a locally injective map. In fact, for $x\in M$ we consider an open subset $U\subset M$ such that $x\in U$ and the restriction map $f_{\mid U}:U\to N$ is injective. Set $y=f(x)\in N$ and consider an open subset $V\subset N$ that $y\in V$ and the restriction map $g_{\mid V}:V\to P$ is injective. Take $W=U\cap f^{-1}(V)$ and note that $W$ is an open subset of $M$, $x\in W$ and the restriction map $(g\circ f)_{\mid W}:W\to P$ is injective. 

\medskip Furthermore, we present inequalities of IC under composition. 

\begin{theorem}[Under Composition]\label{thm:composite}
   Let $f:M\to N$ and $g:N\to P$ be maps. We have \begin{align}\label{ineq:composite}
     \mathrm{IC}(f)\leq\mathrm{IC}(g\circ f)\leq \mathrm{IC}(f)\cdot\mathrm{IC}(g).  
   \end{align} 
\end{theorem}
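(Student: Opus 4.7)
The plan is to prove the two inequalities separately; both are quite direct set-theoretic arguments once the right open covers are chosen.

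\medskip

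For the left inequality $\mathrm{IC}(f)\leq \mathrm{IC}(g\circ f)$, I would assume $\mathrm{IC}(g\circ f)=n<\infty$ (otherwise the bound is trivial) and pick an open cover $U_1,\ldots,U_n$ of $M$ on which $g\circ f$ is injective. The key observation is that injectivity of $g\circ f$ on $U_j$ already forces injectivity of $f$ on $U_j$: if $x,x'\in U_j$ satisfy $f(x)=f(x')$, then $(g\circ f)(x)=(g\circ f)(x')$, so $x=x'$. Hence the same cover witnesses $\mathrm{IC}(f)\leq n$.

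\medskip

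For the right inequality $\mathrm{IC}(g\circ f)\leq \mathrm{IC}(f)\cdot \mathrm{IC}(g)$, I would set $m=\mathrm{IC}(f)$ and $k=\mathrm{IC}(g)$, again assuming both are finite, and choose open covers $M=U_1\cup\cdots\cup U_m$ and $N=V_1\cup\cdots\cup V_k$ on which $f$ and $g$ are respectively injective. The natural construction is to form the refinement
\[
W_{i,j}=U_i\cap f^{-1}(V_j),\qquad 1\leq i\leq m,\ 1\leq j\leq k,
\]
which gives $mk$ open subsets of $M$. To see that they cover $M$, take any $x\in M$; it lies in some $U_i$, and $f(x)$ lies in some $V_j$, so $x\in W_{i,j}$. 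Injectivity of $g\circ f$ on each $W_{i,j}$ follows by a two-step argument: if $x,x'\in W_{i,j}$ satisfy $g(f(x))=g(f(x'))$, then $f(x),f(x')\in V_j$ and injectivity of $g|_{V_j}$ yields $f(x)=f(x')$; since $x,x'\in U_i$ and $f|_{U_i}$ is injective, this forces $x=x'$.

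\medskip

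There is no real obstacle here beyond bookkeeping: the argument only uses that pullbacks of open sets under continuous maps are open, together with the definition of $\mathrm{IC}$. The one thing to mention explicitly is the degenerate case where either $\mathrm{IC}(f)$ or $\mathrm{IC}(g)$ equals $\infty$, in which the right-hand side of \eqref{ineq:composite} is $\infty$ and the inequality holds vacuously; symmetrically, if $\mathrm{IC}(g\circ f)=\infty$ the left inequality is trivial. With these edge cases noted, the two constructions above complete the proof.
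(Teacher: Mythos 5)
Your proposal is correct and follows essentially the same route as the paper: the first inequality reuses the cover witnessing $\mathrm{IC}(g\circ f)$, and the second uses the refinement $W_{i,j}=U_i\cap f^{-1}(V_j)$, which is exactly the paper's $W_{U,V}=\left(f_{\mid U}\right)^{-1}(V)$. Your explicit verification that the $W_{i,j}$ cover $M$ and your handling of the infinite cases are welcome details the paper leaves implicit, but the argument is the same.
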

\begin{proof}
We will see the first inequality. Suppose that $U$ is an open subset of $M$ such that the restriction map $(g\circ f)_{\mid U}:U\to P$ is injective. We will see that the restriction map $f_{\mid U}:U\to N$ is injective. In fact, set $x,x^\prime\in U$ with $f(x)=f(x^\prime)$. Then $g(f(x))=g(f(x^\prime))$ and thus $x=x^\prime$ (here we use the fact that $g\circ f$ is injective over $U$). Thus, we conclude that $\mathrm{IC}(f)\leq\mathrm{IC}(g\circ f)$. 

Now, we will check the second inequality. Suppose that $U$ is an open subset of $M$ such that the restriction map $f_{\mid U}:U\to N$ is injective and $V$ is an open subset of $N$ such that the restriction map $g_{\mid V}:V\to P$ is injective. Consider $W_{U,V}=\left(f_{\mid U}\right)^{-1}(V)$ and observe that $W$ is an open subset of $M$ with the restriction map $\left(g\circ f\right)_{\mid W_{U,V}}:W_{U,V}\to P$ injective. Therefore, we have $\mathrm{IC}(g\circ f)\leq \mathrm{IC}(f)\cdot\mathrm{IC}(g)$.
\end{proof}

For any map $f:M\to N$, note that the equalities \[\mathrm{IC}(1_N\circ f)=\mathrm{IC}(f)=\mathrm{IC}(1_N)\cdot\mathrm{IC}(f)\] hold. Hence, in general, the inequalities in (\ref{ineq:composite}) cannot be improved.  

\medskip Let $f:M\to M$ be a self-map. For each $k\geq 1$, set $f^k=f\circ\cdots\circ f$ the $k$th iteration of $f$. Note that if $f$ is injective, then $f^k$ is also injective for any $k\geq 1$. Furthermore, we have the following statement.

\begin{proposition}\label{prop:iteration}
 Let $f:M\to M$ be a self-map. For each $k\geq 1$, we have \[\mathrm{IC}(f^k)\leq \mathrm{IC}(f^{k+1})\leq \left(\mathrm{IC}(f)\right)^{k+1}.\]   
\end{proposition}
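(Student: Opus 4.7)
The plan is to derive both inequalities by repeatedly invoking Theorem~\ref{thm:composite}, viewing $f^{k+1}$ as the composition $f\circ f^k$ (with $M=N=P$). Since the theorem gives both a lower and an upper bound on $\mathrm{IC}(g\circ f)$ simultaneously, the same single application supplies the left inequality directly and, via induction, the right one.

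For the first inequality, I would set $F=f^k$ and $G=f$ so that $G\circ F=f^{k+1}$. The left half of~(\ref{ineq:composite}) then reads $\mathrm{IC}(F)\leq\mathrm{IC}(G\circ F)$, i.e.\ $\mathrm{IC}(f^k)\leq\mathrm{IC}(f^{k+1})$, which is exactly what is needed.

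For the second inequality, I would induct on $k\geq 1$. The base case $k=1$ is just the right half of~(\ref{ineq:composite}) applied to $f\circ f$, giving $\mathrm{IC}(f^{2})\leq\mathrm{IC}(f)\cdot\mathrm{IC}(f)=(\mathrm{IC}(f))^{2}$. For the inductive step, assume $\mathrm{IC}(f^k)\leq(\mathrm{IC}(f))^k$. The right half of~(\ref{ineq:composite}), with the same choice $F=f^k$, $G=f$, yields
\[
\mathrm{IC}(f^{k+1})=\mathrm{IC}(G\circ F)\leq\mathrm{IC}(F)\cdot\mathrm{IC}(G)=\mathrm{IC}(f^k)\cdot\mathrm{IC}(f)\leq(\mathrm{IC}(f))^{k}\cdot\mathrm{IC}(f)=(\mathrm{IC}(f))^{k+1},
\]
closing the induction.

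There is essentially no obstacle here: the proposition is a direct corollary of Theorem~\ref{thm:composite}. The only thing to be careful about is bookkeeping — making sure that $f^{k+1}$ is decomposed as a composition of two maps to which the theorem applies (both domain/codomain equal to $M$, which is automatic for a self-map), and keeping the induction hypothesis aligned with the exponent appearing on the right-hand side.
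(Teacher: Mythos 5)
Your proof is correct and follows exactly the route the paper intends: the paper's own proof is a one-line appeal to Theorem~\ref{thm:composite}, and your decomposition $f^{k+1}=f\circ f^k$ with the left inequality giving monotonicity and an induction on the right inequality is precisely the argument being invoked.
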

\begin{proof}
    It follows from Theorem~\ref{thm:composite}.
\end{proof}

Proposition~\ref{prop:iteration} implies the following statement.

\begin{corollary}\label{cor:ic-finite-finite-any-itera}
Let $f:M\to M$ be a self-map. If $\mathrm{IC}(f)<\infty$, then $\mathrm{IC}(f^k)<\infty$ for any $k\geq 1$.    
\end{corollary}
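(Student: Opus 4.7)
The statement is an almost immediate consequence of Proposition~\ref{prop:iteration}, so my plan is to set up the logical chain as directly as possible and then check that the finiteness bound it produces is the one needed. Concretely, assume $\mathrm{IC}(f)=n<\infty$. Proposition~\ref{prop:iteration} already delivers, for every $k\geq 1$, the inequality
\[
\mathrm{IC}(f^{k})\leq \mathrm{IC}(f^{k+1})\leq \bigl(\mathrm{IC}(f)\bigr)^{k+1}=n^{k+1}.
\]
Since $n<\infty$, the right-hand side $n^{k+1}$ is a finite positive integer, and hence $\mathrm{IC}(f^{k})<\infty$. That alone closes the argument.

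If one wishes to make the proof self-contained rather than citing Proposition~\ref{prop:iteration}, I would proceed by induction on $k\geq 1$. The base case $k=1$ is the hypothesis $\mathrm{IC}(f)<\infty$. For the inductive step, assume $\mathrm{IC}(f^{k})<\infty$; then by the second inequality of Theorem~\ref{thm:composite} applied to the pair $(f^{k},f)$ (both self-maps of $M$) one has
\[
\mathrm{IC}(f^{k+1})=\mathrm{IC}(f\circ f^{k})\leq \mathrm{IC}(f)\cdot \mathrm{IC}(f^{k})<\infty,
\]
which completes the induction. In particular, one recovers the explicit bound $\mathrm{IC}(f^{k})\leq n^{k}$, consistent with (and slightly sharper than) the bound extracted from Proposition~\ref{prop:iteration}.

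There is essentially no obstacle here: the statement is a qualitative finiteness assertion, and Theorem~\ref{thm:composite} supplies a quantitative bound that manifestly propagates under iteration. The only minor thing to be mindful of when writing the proof is to treat $k=1$ as the base case (so that the iteration $f^{k}$ is well defined as $f\circ\cdots\circ f$, $k$ times) and to state clearly that the product of finitely many finite positive integers is finite, which is what makes the induction close.
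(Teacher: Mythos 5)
Your argument is correct and follows the paper's route exactly: the corollary is stated there as an immediate consequence of Proposition~\ref{prop:iteration}, which is in turn derived from Theorem~\ref{thm:composite}. Your optional self-contained induction via Theorem~\ref{thm:composite}, yielding the bound $\mathrm{IC}(f^{k})\leq n^{k}$, is a valid (and slightly more explicit) rendering of the same idea.
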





\subsection{Cohomological lower bound} Let $M, N$ be topological manifolds (without boundary) with the same dimension. From the Invariance Domain Theorem (see \cite[Theorem 10.3.7, p. 251]{tom2008}) one has that if $f:M\to N$ is an inyective map, then $f:M\to N$ is an open map (and, of course,  $f(M)$ is an open subset of $N$ and $f:M\to f(M)$ is an homeomorphism), see \cite[Corol\'ario B.1.3, p. 133]{zapata2017}.

\medskip

We present a cohomological lower bound for the injective category number of a surjective map between topological manifolds with the same dimension, a tool widely used in computations, arises as follows. We follow the notation from \cite[Defini\c{c}\~{a}o 2.4.14, p. 60]{zapata2022} and \cite[Chapter 2, Section 6]{kono2006}, a multiplicative cohomology theory $h^\ast$ on the homotopy category of pairs of spaces comes equipped with a relative cohomology product \[\cup:h^\ast(X,A)\otimes h^\ast(X,B)\to h^\ast(X,A\cup B)\] whenever $A,B\subset X$ are excisive. In our case, $A$ and $B$ will be open sets. On the other hand, consider the \textit{index of nilpotence}
$$\text{nil}(S)=\min\{n:~\text{every product of $n$ elements in $S$ vanishes}\}$$
defined for a subset $S$ of a ring $R$.

\begin{theorem}[Cohomological Lower Bound]\label{thm:cohomo-ic}
Let $M, N$ be topological manifolds (without boundary) with the same dimension. Let $h^\ast$ be a multiplicative cohomology theory on the homotopy category of pairs of spaces. For any surjective map $f:M\to N$, we have
\[\mathrm{nil}\left(\mathrm{Ker}(f^\ast:h^\ast(N)\to h^\ast(M))\right)\leq\mathrm{IC}(f).\]
\end{theorem}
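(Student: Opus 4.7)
The plan is to mimic the standard cup-length argument for Lusternik--Schnirelmann category, but in the relative cohomology of $N$ rather than of $M$, using Invariance of Domain to transfer the covering of $M$ into an open covering of $N$.

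First I would set $\ell = \mathrm{IC}(f)$ and pick an open cover $M = U_1 \cup \cdots \cup U_\ell$ with each $f|_{U_j}\colon U_j \to N$ injective. Since $M$ and $N$ are topological manifolds of equal dimension, the Invariance of Domain theorem (as quoted in the paper before the statement) says each $f|_{U_j}$ is an open embedding, so $V_j := f(U_j)$ is open in $N$ and $f|_{U_j}\colon U_j \to V_j$ is a homeomorphism. Surjectivity of $f$ together with $M = \bigcup U_j$ then yields the open cover $N = V_1 \cup \cdots \cup V_\ell$.

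Next I would pick arbitrary classes $u_1,\ldots,u_\ell \in \mathrm{Ker}(f^\ast)\subset h^\ast(N)$ and show that each $u_j$ restricts trivially to $V_j$. The key tool is the commutative square
\[
\xymatrix{
h^\ast(N) \ar[r]^{j_{V_j}^\ast} \ar[d]_{f^\ast} & h^\ast(V_j) \ar[d]^{(f|_{U_j})^\ast} \\
h^\ast(M) \ar[r]_{i_{U_j}^\ast} & h^\ast(U_j)
}
\]
where $(f|_{U_j})^\ast$ is an isomorphism because $f|_{U_j}$ is a homeomorphism. Since $f^\ast(u_j)=0$, chasing the square forces $j_{V_j}^\ast(u_j) = 0$. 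By the long exact sequence of the pair $(N,V_j)$ I can then lift $u_j$ to a relative class $\widetilde{u}_j \in h^\ast(N,V_j)$ that restricts back to $u_j$ under the map $h^\ast(N,V_j) \to h^\ast(N)$.

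Finally, iterating the relative cup product
\[
h^\ast(N,V_1)\otimes \cdots \otimes h^\ast(N,V_\ell) \longrightarrow h^\ast\bigl(N,\, V_1\cup\cdots\cup V_\ell\bigr) = h^\ast(N,N) = 0,
\]
(valid because the $V_j$ are open, hence form an excisive family), the product $\widetilde{u}_1 \cup \cdots \cup \widetilde{u}_\ell$ vanishes. Naturality of the relative product under the forgetful map to absolute cohomology shows that it maps to $u_1 \cdots u_\ell \in h^\ast(N)$, which is therefore $0$. Since $u_1,\ldots,u_\ell$ were arbitrary elements of $\mathrm{Ker}(f^\ast)$, we obtain $\mathrm{nil}(\mathrm{Ker}(f^\ast)) \leq \ell = \mathrm{IC}(f)$.

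The step I expect to be the most delicate is the lifting argument, specifically checking that the required naturality of the cup product relates the relative product $\widetilde{u}_1\cup\cdots\cup\widetilde{u}_\ell$ to the absolute product $u_1\cdots u_\ell$, and verifying that the iterated excisiveness for a family of $\ell$ open sets is actually guaranteed by the multiplicative cohomology theory axioms as stated. The rest is essentially formal manipulation of the exact sequence of a pair and the diagram above.
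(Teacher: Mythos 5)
Your proposal is correct and follows essentially the same argument as the paper's proof: Invariance of Domain turns the injective restrictions into open embeddings, surjectivity gives the open cover $N=\bigcup f(U_j)$, the commutative square with the isomorphism $(f|_{U_j})^\ast$ shows each kernel class dies on $f(U_j)$, and the lifted relative classes multiply into $h^\ast(N,N)=0$. The only cosmetic difference is that the paper phrases the final step as a contradiction rather than directly, and it handles the excisiveness point exactly as you anticipate, by noting the relative product is defined for open subsets.
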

\begin{proof}
  Set $\mathrm{IC}(f)=m<\infty$ and take $U_1,\ldots,U_m$ open subsets of $M$ such that $M=U_1\cup\cdots\cup U_m$ and each restriction $f_{| U_i}:U_i\to N$ is injective (and, of course,  each $f(U_i)$ is an open subset of $N$ and $f_|:U_i\to f(U_i)$ is a homeomorphism). Note that $N=\bigcup_{i=1}^m f(U_i)$ (here we use that $f:M\to N$ is surjective). Suppose that $\mathrm{nil}\left(\mathrm{Ker}(f^\ast:h^\ast(N)\to h^\ast(M))\right)>m$. Then, there are $\alpha_1,\ldots,\alpha_m\in \mathrm{Ker}(f^\ast:h^\ast(N)\to h^\ast(M))$ such that $\alpha_1\cup\cdots\cup\alpha_m\neq 0$. Set $\alpha_i\in h^{n_i}(N)$ for each $i=1,\ldots,m$. 

\medskip We have the following commutative diagrams:
    \begin{eqnarray*} 
\xymatrix{ h^\ast(M)\ar[rd]_{incl^\ast}  & & h^\ast(N)\ar[ll]_{f^\ast}  \ar[dl]^{\left(f_{| U_i}\right)^\ast}  \\
        &  h^\ast(U_i) & } & \xymatrix{ h^\ast(N)\ar[rd]_{\left(f_{| U_i}\right)^\ast} \ar[rr]^{incl^\ast}  & & h^\ast(f(U_i)) \ar[dl]^{\left(f_|\right)^\ast}  \\
        &  h^\ast(U_i) & }
\end{eqnarray*} Then $\mathrm{Ker}\left(f^\ast\right)\subset \mathrm{Ker}\left(\left(f_{| U_i}\right)^\ast\right)$ and $\mathrm{Ker}\left(\left(f_{| U_i}\right)^\ast\right)=\mathrm{Ker}\left(incl^\ast:h^\ast(N)\to h^\ast(f(U_i))\right)$ (this last equality follows from the fact that $\left(f_|\right)^\ast$ is an isomorphism). Hence, $\alpha_1,\ldots,\alpha_m\in \mathrm{Ker}\left(incl^\ast:h^\ast(N)\to h^\ast(f(U_i))\right)$. 

\medskip Now, for each $i=1,\ldots,m$, we consider the exact sequence associated to the pair $(N,f(U_i))$:
    \[\cdots\to h^{n_i}(N,f(U_i))\stackrel{j^\ast}{\to}
    h^{n_i}(N)\stackrel{incl^\ast}{\to}h^{n_i}(f(U_i))
    \stackrel{\partial}{\to} h^{n_i+1}(X,f(U_i))\to\cdots\]
 where $j:N\hookrightarrow (N,f(U_i))$ is the inclusion map. Then $\alpha_i\in \mathrm{Ker}\left(incl^\ast\right)=\mathrm{Im}(j^\ast)$ and thus there is a cohomology class $\widetilde{a_i}\in h^{n_i}(N,f(U_i))$ such that $j^\ast(\widetilde{\alpha_i})=\alpha_i$.

\medskip Then $\widetilde{a_1}\cup\cdots\cup \widetilde{a_m}=0$ because $\widetilde{a_1}\cup\cdots\cup \widetilde{a_m}\in h^{n_1+\cdots+n_m}(N,N)=0$ (here we use the fact that $N=\bigcup_{i=1}^m f(U_i)$). Hence, one has $0=j^\ast\left(\widetilde{a_1}\cup\cdots\cup\widetilde{a_m}\right)=\alpha_1\cup\cdots\cup\alpha_m$, which is a contradiction. Therefore, $\mathrm{nil}\left(\mathrm{Ker}(f^\ast:h^\ast(N)\to h^\ast(M))\right)\leq m=\mathrm{IC}(f)$.
\end{proof}

\medskip

In concrete cases (e.g.~those worked out below) we do not attempt to compute the entire kernel of the homomorphism
$f^\ast:h^\ast(N)\to h^\ast(M)$, but we rather look for specific elements in the kernel and try to find long
non-trivial products.

\medskip 

\begin{remark}
    \noindent
    \begin{enumerate}
        \item The surjective hypothesis in Theorem~\ref{thm:cohomo-ic} cannot be relaxed. For example, consider an inclusion map $j:(0,1)\hookrightarrow S^1$. Set $H^\ast(-;\mathbb{Z})$ be singular cohomology with integer coefficients. Note that a generator $\alpha\in H^1(S^1;\mathbb{Z})\cong \mathbb{Z}$ satisfies $j^\ast(\alpha)=0$, and thus \[\mathrm{nil}\left(\mathrm{Ker}(j^\ast:H^\ast(S^1;\mathbb{Z})\to H^\ast((0,1);\mathbb{Z}))\right)= 2.\] However, $\mathrm{IC}(j)=1$.
        \item The lower bound in Theorem~\ref{thm:cohomo-ic} can be reached. For example, in the case that $f$ is a homeomorphism (in this case, one has $\mathrm{nil}\left(\mathrm{Ker}(f^\ast)\right)=1=\mathrm{IC}(f)$). Furthermore, consider the map $g:(0,1)\to S^1$ given by \[g(t)=\begin{cases}
            e^{i2\pi(2t)},& \mbox{if $0<t\leq 1/2$;}\\
            e^{i2\pi(1/2)(t-1/2)},& \mbox{if $1/2\leq t<1$.}
        \end{cases}\] In this case, one has $\mathrm{nil}\left(\mathrm{Ker}(g^\ast:H^\ast(S^1;\mathbb{Z})\to H^\ast((0,1);\mathbb{Z}))\right)=2=\mathrm{IC}(g)$.
    \end{enumerate}
\end{remark}

\medskip

We also consider the following example.

\medskip

\begin{example}\label{exam:lower-quotient}
Let $q:S^n\to \mathbb{R}P^n$ be the usual quotient map. Note that $q^\ast(u)=0$ where $u\in H^1(\mathbb{R}P^n;\mathbb{Z}_2)\cong\mathbb{Z}_2$ is the generator. Recall that $u^n\neq 0$ then, by Theorem~\ref{thm:cohomo-ic}, one has $\mathrm{IC}(q)\geq n+1$. We will improve this estimate in the next section (Example~\ref{exam:sphere}).
\end{example}

\medskip

Furthermore, we have the following statement which presents a cohomological obstruction for the injectivity of a map whose codomain is an Euclidean space.

\medskip

\begin{theorem}[Codomain an Euclidean Space]\label{between-euclidean}%
 Given $n\geq 1$ and $X$ is a space. Let $f:X\to \mathbb{R}^n$ be a map such that $f^{-1}(S^{n-1})\neq\varnothing$ is a compact space. Consider $f_|:f^{-1}(S^{n-1})\to S^{n-1}$ as the usual restriction map. Let $h^\ast$ be a multiplicative cohomology theory on the homotopy category of pairs of spaces. Set $u\in h^\ast(S^{n-1})$ with $u\neq 0$. If $\left(f_|\right)^\ast(u)=0$, then $f$ is not injective (and, of course, $\mathrm{IC}(f)\geq 2$).
\end{theorem}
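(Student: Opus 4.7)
I will argue by contrapositive: suppose $f$ is injective, and deduce $(f_|)^\ast(u)\neq 0$ for every nonzero $u\in h^\ast(S^{n-1})$. Set $K:=f^{-1}(S^{n-1})$, which is compact and nonempty by hypothesis. Since $f$ is injective, so is its restriction $f_|\colon K\to S^{n-1}$. Because a continuous injection from a compact space into a Hausdorff space is automatically a homeomorphism onto its image, $f_|$ factors as a homeomorphism $\varphi\colon K\xrightarrow{\;\cong\;}Y$ followed by the inclusion $j\colon Y\hookrightarrow S^{n-1}$, where $Y:=f(K)$ is compact (and hence closed) in $S^{n-1}$.

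Applying $h^\ast$, this factorization gives $(f_|)^\ast=\varphi^\ast\circ j^\ast$ with $\varphi^\ast$ an isomorphism. Consequently, $(f_|)^\ast(u)=0$ if and only if $j^\ast(u)=0$ in $h^\ast(Y)$, so the theorem reduces to showing that the restriction map $h^\ast(S^{n-1})\to h^\ast(Y)$ is nonzero on $u$ whenever $f$ is globally injective.

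The main obstacle lies in this last reduction, where the global injectivity of $f$ (beyond what has been exploited on $K$ alone) must be brought to bear. My plan is to adapt the relative-product argument from the proof of Theorem~\ref{thm:cohomo-ic}: assuming $j^\ast(u)=0$, the long exact sequence of the pair $(S^{n-1},Y)$ produces a lift $\widetilde u\in h^\ast(S^{n-1},Y)$. I would then combine this lift with the contractibility of $\mathbb{R}^n$ and the injectivity of $f$ on the complementary regions $f^{-1}(\overline{B^n})$ and $f^{-1}(\mathbb{R}^n\setminus B^n)$---on each of which the same compact-to-Hausdorff argument makes $f$ a homeomorphism onto its image---to pull $\widetilde u$ back to $X$ and pair it with the relative fundamental class of the pair $(\mathbb{R}^n,S^{n-1})$, concluding $u=0$ and reaching a contradiction. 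The delicate point will be to make this pullback-and-pairing manoeuvre work without assuming any manifold structure on $X$.
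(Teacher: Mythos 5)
Your proposal stops exactly where the proof has to happen, so as it stands there is a genuine gap. You correctly observe that injectivity of $f$ only makes $f_|$ a homeomorphism of $K=f^{-1}(S^{n-1})$ onto its image $Y=f(K)$, and you reduce the theorem to showing that the restriction $j^\ast\colon h^\ast(S^{n-1})\to h^\ast(Y)$ does not annihilate $u$. But you then only sketch a plan (lift $u$ to $h^\ast(S^{n-1},Y)$ via the exact sequence of the pair, pull back to $X$, pair with a relative fundamental class of $(\mathbb{R}^n,S^{n-1})$) and explicitly defer ``the delicate point''. That step cannot be completed at this level of generality: if $Y$ is a proper compact subset of $S^{n-1}$ --- say a single point, which happens for $X=\{\mathrm{pt}\}$ mapped injectively onto a point of $S^{n-1}$ --- then $j^\ast$ kills the generator of $H^{n-1}(S^{n-1};\mathbb{Z})$ for $n\geq 2$ even though $f$ is injective, $f^{-1}(S^{n-1})$ is nonempty and compact, and all the stated hypotheses hold. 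So the intermediate claim your reduction requires is simply false without a further hypothesis, and no pairing with fundamental classes (which in any case needs structure on $X$ that the hypotheses do not supply) can rescue it.

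The paper's own proof is one line precisely because it asserts that $f_|$ is a \emph{bijection} onto $S^{n-1}$, i.e., it tacitly uses $S^{n-1}\subseteq f(X)$; granting that, compactness of $K$ and Hausdorffness of $S^{n-1}$ make $f_|$ a homeomorphism, so $(f_|)^\ast$ is an isomorphism and cannot kill $u\neq 0$. Your analysis in fact pinpoints where that surjectivity is needed: with the extra hypothesis $S^{n-1}\subseteq f(X)$ (which holds in the intended application to surjective polynomial maps preceding Example~\ref{exam:coho-restrci-non-injective}) your first two paragraphs already finish the argument, since then $Y=S^{n-1}$ and $j^\ast$ is the identity; without it, the one-point example above defeats both your plan and the statement as literally written. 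To repair the write-up, add that hypothesis and delete the relative-cohomology detour entirely rather than trying to make the pullback-and-pairing manoeuvre work.
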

\begin{proof}
    Suppose that $f$ is injective. One has $f_|:f^{-1}(S^{n-1})\to S^{n-1}$ is a bijection where $f^{-1}(S^{n-1})$ is a compact space, and thus $f_|$ is a homeomorphism. Hence, $\left(f_|\right)^\ast$ is an isomorphism, which is a contradiction. Therefore, $f$ is not injective.
\end{proof}

\medskip

Before to present the next example, let us mention that in \cite[Theorem 4]{MR4753913} the authors present a non-injective polynomial local diffeomorphism  $g:\mathbb{R}^2\to\mathbb{R}^2$ such that $g$ is surjective. 


\begin{example}\label{exam:coho-restrci-non-injective}
    Let $f:\mathbb{R}^2\to\mathbb{R}^2$ be a polynomial map. Suppose $f^{-1}(S^1)\neq\varnothing$ is a compact space. If $\left(f_|\right)^\ast(u)=0$ where $u\in H^1(S^1;\mathbb{Z})$ is a generator, then, by Theorem~\ref{between-euclidean}, $f$ is not injective (and, of course, $\mathrm{IC}(f)\geq 2$).
\end{example}


We have the following remark.

 \begin{remark}\label{rem:same-dim-locally-inject-opne}
 Let $M, N$ be topological manifolds (without boundary) with the same dimension, and $f:M\to N$ be a map.  \begin{enumerate}
     \item[(1)] From the Invariance Domain Theorem (see \cite[Theorem 10.3.7, p. 251]{tom2008}) one has that if $f:M\to N$ is locally inyective, then $f:M\to N$ is an open map. 
     \item[(2)] Suppose that $M$ is compact and $N$ connected and not compact. Note that $f:M\to N$ must not be locally injective (and, of course, $\text{IC}(f)=\infty$). Otherwise, $f:M\to N$ would be an open map (by Item(1)), and thus $f(M)$ would be open and compact (and of course closed), and these  imply that $f(M)=N$, because $N$ is connected, which is a contradiction.    
 \end{enumerate} 
 \end{remark}

 By Remark~\ref{rem:same-dim-locally-inject-opne}(2), we have that any map $f:S^m\to \mathbb{R}^m$ must not be locally injective (and, of course, $\text{IC}(f)=\infty$). In contrast, see Example~\ref{exam:sphere}, for the usual quotient map $\mathfrak{q}^{S^n}:S^n\to\mathbb{R}P^n$ we have $\mathrm{IC}(\mathfrak{q}^{S^n})=n+2$.
\section{$2$-th index of a free $G$-space}\label{sec:involution}
In this section we present a review of $2$-th Borsuk-Ulam property and the notion of $2$-th index of a free $G$-space (presented in \cite{zapata-dac}) and provide a lower bound for the injective category number of the orbit (or quotient) map in terms of the 
$2$-th index (Theorem~\ref{thm:BU-implies-IC})\footnote{In \cite{zapata-dac}, the authors present the notion of $q$-th BUP and $q$-th index, $\text{ind}_q(X,G)$, for any $2\leq q\leq |G|$, but for this work we only consider the case $q=2$ because the inequality $\text{ind}_q(X,G)\geq \text{ind}_{q+1}(X,G)$ always hold (see \cite[Remark 2.4(2)]{zapata-dac}).}. In the case $G=\mathbb{Z}_2$, this lower bound is achieved.   

\medskip The \textit{ordered configuration space} of $\ell$ distinct points on $Y$ (see \cite{fadell1962configuration}) is the topological space \[F(Y,\ell)=\{(y_1,\ldots,y_\ell)\in Y^\ell:~y_i\neq y_j \text{ for any $i\neq j$}\}\] topologised as a subspace of the Cartesian power $Y^\ell=Y\times\cdots\times Y$ ($\ell$ times).

\medskip Let $G$ be a finite group of order $\ell:=|G|>1$. Set $G=\{g_1,\ldots,g_\ell\}$ and $h:G\to [\ell]$ a bijection given by $h(g_j)=j$, here $[\ell]=\{1,\ldots,\ell\}$. We consider the left action of $G$ on $Y^\ell$  by \begin{equation}\label{eq:action}
g(y_1,y_2,\ldots,y_\ell)=\left(y_{h(g^{-1}g_1)},\ldots,y_{h(g^{-1}g_\ell)}\right). \end{equation} This action restricts to a free left action of $G$ on $F(Y,\ell)$, see \cite{zapata-dac}. In the remainder of this paper we will consider $F(Y,\ell)$ as a $G$-space with respect to this action. 

\medskip  Let $X$ be a free $G$-space for $G$ a finite group of order $|G|>1$, and let $Y$ be a Hausdorff space. We say that $\left((X,G);Y\right)$ \textit{satisfies the $2$-th Borsuk-Ulam property} (which we shall routinely abbreviate to $2$-th BUP) if  for every map $f:X\to Y$ there exists a point $x\in X$ such that there exist distinct $g,g'\in G$ such that $f(gx)=f(g'x)$, see \cite[Remark 2.1(2)]{zapata-dac} (for the case $G=\mathbb{Z}_2$ also see \cite{zapata2023} and cf. \cite[p. 371]{crabb2016}). 

\medskip Let $S^m$ be the $m$-dimensional sphere, $A:S^m\to S^m$ the antipodal involution (i.e., $A(x)=-x$ for any $x\in S^m$) 
 and $\mathbb{R}^n$ the $n$-dimensional Euclidean space. The famous Borsuk-Ulam theorem states that for every continuous map $f:S^m\to \mathbb{R}^m$ there exists a point $x\in S^m$ such that $f(x)=f(-x)$ \cite{borsuk1933}. 

 \medskip A natural generalization of the Borsuk-Ulam theorem consists in replacing $S^m$ together with the free involution given by the antipodal map by a free $G$-space $X$, and then to ask which triples $\left((X,G);\mathbb{R}^n\right)$ satisfy the $2$-th BUP,  see \cite{zapata-dac} (for the case $G=\mathbb{Z}_2$ also see \cite{zapata2023} and cf. \cite[p. 372]{crabb2016}). A major problem is to find the greatest $n$ such that the $2$-th BUP holds for a specific $(X,G)$. 

\begin{definition}[$2$-th Index]\label{defn-index} Let $X$ be a free $G$-space for $G$ a finite group of order $\ell:=|G|>1$. Following \cite[Definition 2.3]{zapata-dac} the \textit{$2$-th index} of $(X,G)$, denoted by $\text{ind}_2(X,G)$, is defined by the least integer $n\in \{0,1, 2, \ldots\}$ such that there exists a $G$-equivariant map $X\to F(\mathbb{R}^{n+1},\ell)$. We set $\text{ind}_2(X,G)=\infty$ if no such $n$ exists.  
\end{definition}

From \cite[Remark 2.4(1)]{zapata-dac} (for the case $G=\mathbb{Z}_2$ also see \cite[Proposition 2.2]{goncalves2010} or \cite{zapata2023}) follows that the greatest $n\geq 0$ such that 
 $\left((X,G);\mathbb{R}^n\right)$ satisfies the $2$-th BUP coincides with $\text{ind}_2(X,G)$. Hence, observe that $\text{ind}_2(X,G)=\infty$ if and only if $\left((X,G);\mathbb{R}^n\right)$ satisfies the $2$-th BUP for any $n\geq 0$. The index, $\text{ind}_{2}(X,\mathbb{Z}_2)$,  coincides with the $\mathbb{Z}_2$-index from \cite[Definition 2.2, p. 41]{zapata2023}, cf. \cite[Definition 5.3.1]{matousek2003}. Note that the existence of a free action of $\mathbb{Z}_2$ on $X$ is equivalent to that of a fixed-point free involution $\tau:X\to X$. In this case, we write $\text{ind}(X,\tau)$ instead of $\text{ind}_{2}(X,\mathbb{Z}_2)$.

 \medskip Before to present the main theorem of this section, we state the following statement. 

 \begin{proposition}\label{prop:bup-covering}
   Let $X$ be a metric free $G$-space for $G$ a finite group of order $|G|>1$. Consider the following statements:
   \begin{itemize}
       \item[(a)] The triple $\left((X,G);\mathbb{R}^n\right)$ satisfies the $2$-th BUP.
       \item[(b)] For any cover $F_1,\ldots,F_{n+1}$ of $X$ by $n+1$ closed sets, there
is at least one set $F_j$ containing the set $\{gx,g'x\}$ for some $x\in X$ and for some distinct $g,g'\in G$. 
\item[(c)] For any cover $U_1,\ldots,U_{n+1}$ of $X$ by $n+1$ open sets, there
is at least one set $U_j$ containing the set $\{gx,g'x\}$ for some $x\in X$ and for some distinct $g,g'\in G$. 
   \end{itemize}
   We have:
   \begin{enumerate}
       \item[(1)] (a) $\Rightarrow$ (b).
\item[(2)] (b) $\Rightarrow$ (c). 
   \end{enumerate}
 \end{proposition}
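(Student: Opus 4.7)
The plan is to handle the two implications separately, using only the metric structure of $X$ through the distance function and the shrinking lemma.

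For (1), that is (a) $\Rightarrow$ (b), I would argue by contrapositive-free direct deduction. Given a closed cover $F_1, \ldots, F_{n+1}$ of $X$, I would build a test map $f\colon X \to \mathbb{R}^{n}$ by
\[
f(x) = \bigl(d(x, F_1),\, d(x, F_2),\, \ldots,\, d(x, F_n)\bigr),
\]
which is continuous because the distance function to any subset of a metric space is $1$-Lipschitz. Applying hypothesis (a) produces $x \in X$ and distinct $g, g' \in G$ with $f(gx) = f(g'x)$, i.e., $d(gx, F_j) = d(g'x, F_j)$ for every $j = 1, \ldots, n$. Since the $F_j$ are closed, membership is detected by the distance vanishing. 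I would then split into two cases: either $gx \in F_j$ for some $j \le n$, in which case $d(g'x, F_j) = d(gx, F_j) = 0$ forces $g'x \in F_j$; or $gx \notin F_j$ for all $j \le n$, in which case covering forces $gx \in F_{n+1}$, and the same equalities yield $d(g'x, F_j) > 0$ for every $j \le n$, so the covering property again forces $g' x \in F_{n+1}$. In either case some $F_j$ contains $\{gx, g'x\}$, which is exactly (b).

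For (2), that is (b) $\Rightarrow$ (c), the natural approach is to pass from an open cover to a closed refinement. Given $U_1, \ldots, U_{n+1}$ open with $X = \bigcup U_i$, I would invoke the shrinking lemma for normal spaces (which applies since every metric space is normal): there exist closed sets $F_1, \ldots, F_{n+1}$ with $F_i \subseteq U_i$ and $X = \bigcup F_i$. Then (b) gives an index $j$, a point $x \in X$, and distinct $g, g' \in G$ with $\{gx, g'x\} \subseteq F_j \subseteq U_j$, which is exactly the conclusion of (c).

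The main obstacle — really only a technical point rather than a genuine obstacle — is ensuring that distances measured on $X$ with its $G$-action behave well enough to make the test map continuous and to make the membership-by-distance argument precise. The metric hypothesis on $X$ does all of this work for us: it gives continuity of $d(\,\cdot\,, F_j)$, it makes closed sets the zero-sets of distance functions, and it secures the normality used in the shrinking lemma. No equivariance of the metric is needed, because the conclusion only concerns the images $gx$ and $g'x$ as points of $X$, not the action itself. I would be explicit about quoting the shrinking lemma, since it is the only nontrivial topological input in step (2).
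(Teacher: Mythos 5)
Your proposal is correct and follows essentially the same route as the paper: the distance-vector test map $x\mapsto(d(x,F_1),\ldots,d(x,F_n))$ with the two-case analysis on vanishing coordinates for (a)$\Rightarrow$(b), and the shrinking lemma for normal spaces (citing metrizability for normality) for (b)$\Rightarrow$(c). No substantive differences to report.
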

 \begin{proof}
The proof of this claim proceeds by analogy with \cite[Theorem 2.1.1, p. 23]{matousek2003}. \begin{enumerate}
       \item[(1)] For a closed cover $F_1,\ldots,F_{n+1}$ of the metric space $X$ we define a map $f:X\to\mathbb{R}^n$ by $f(x)=\left(\mathrm{dist}(x,F_1),\ldots,\mathrm{dist}(x,F_n)\right)$, and we consider a point $x\in X$ with $f(gx)=f(g'x)=y$ for some distinct $g,g'\in G$, which exist by (a). If 
the $i$th coordinate of the point $y$ is $0$, then both $gx$ and $g'x$ are in $F_i$. If all coordinates of $y$ are nonzero, then both $gx$ and $g'x$ lie in $F_{n+1}$.
\item[(2)] Since $X$ is a metric space, it is a normal space. We will check $(b)\Rightarrow (c)$. It follows from the fact that for every open cover $U_1,\ldots,U_{n+1}$ of $X$ there exists a closed cover $F_1,\ldots,F_{n+1}$ of $X$ satisfying $F_i\subset U_i$ for $i=1,\ldots,n+1$ (here we use the fact that $X$ is normal, see \cite[Theorem 6.1, p. 152]{dugundji1966general}).  
   \end{enumerate}    
 \end{proof}
 
\medskip Let $X$ be a metric free $G$-space. We provide a lower bound for the injective category of the quotient map $\mathfrak{q}:X\to X/G$ in terms of the $2$-th index, $\text{ind}_2(X,G)$. In the case $G=\mathbb{Z}_2$, this lower bound is achieved. Note that, given a nonempty subset $A\subset X$, the restriction map $\mathfrak{q}_{\mid A}:A\to X/G$ is injective if and only if $\{gx,g'x\}\not\subset A$ for all $x\in X$ and for all distinct $g,g'\in G$, which is equivalent to $A\cap gA=\varnothing$ for any $g\in G\setminus\{1\}$.  

\begin{theorem}[IC and $2$-th Index]\label{thm:BU-implies-IC}
Let $X$ be a metric free $G$-space for $G$ a finite group of order $\ell:=|G|>1$. Let $\mathfrak{q}^X:X\to X/G$ be the quotient map, and $n=\mathrm{ind}_2(X,G)$. We have \[n+2\leq \mathrm{IC}(\mathfrak{q}^X)\leq \mathrm{IC}\left(\mathfrak{q}^{F(\mathbb{R}^{n+1},\ell)}\right).\] The equalities hold whenever $G=\mathbb{Z}_2$.   
\end{theorem}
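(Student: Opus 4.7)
The plan is to prove the chain of inequalities in three parts. \textbf{Lower bound.} I would show $n+2\le\mathrm{IC}(\mathfrak{q}^X)$ by contradiction: if $\mathrm{IC}(\mathfrak{q}^X)\le n+1$, pad with empty sets to get an open cover $U_1,\dots,U_{n+1}$ of $X$ on which each $\mathfrak{q}^X_{\mid U_j}$ is injective, equivalently (as noted in the paragraph just before the theorem statement) no $U_j$ contains a pair $\{gx,g'x\}$ with $g\neq g'$ in $G$. But since $n=\mathrm{ind}_2(X,G)$, the triple $((X,G);\mathbb{R}^n)$ satisfies the $2$-th BUP, so Proposition~\ref{prop:bup-covering} (parts (1) and (2)) forces some $U_j$ to contain such a pair, a contradiction.

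\textbf{Upper bound.} For $\mathrm{IC}(\mathfrak{q}^X)\le\mathrm{IC}(\mathfrak{q}^{F(\mathbb{R}^{n+1},\ell)})$, Definition~\ref{defn-index} provides a $G$-equivariant map $\varphi\colon X\to F(\mathbb{R}^{n+1},\ell)$. Given an open cover $V_1,\dots,V_k$ of $F(\mathbb{R}^{n+1},\ell)$ on which $\mathfrak{q}^{F(\mathbb{R}^{n+1},\ell)}$ restricts injectively, I would show that the pullback cover $U_i=\varphi^{-1}(V_i)$ works for $\mathfrak{q}^X$: if $x_1,x_2\in U_i$ satisfy $x_2=gx_1$ for some $g\in G$, equivariance gives $\varphi(x_2)=g\varphi(x_1)\in V_i$, injectivity on $V_i$ forces $\varphi(x_1)=g\varphi(x_1)$, and freeness of the $G$-action on $F(\mathbb{R}^{n+1},\ell)$ yields $g=1$, so $x_1=x_2$.

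\textbf{Equality for $G=\mathbb{Z}_2$.} It suffices to show $\mathrm{IC}(\mathfrak{q}^{F(\mathbb{R}^{n+1},2)})\le n+2$, which collapses the sandwich. The $\mathbb{Z}_2$-equivariant map $\psi\colon F(\mathbb{R}^{n+1},2)\to S^n$ (antipodal action on the target), $(y_1,y_2)\mapsto(y_1-y_2)/\|y_1-y_2\|$, reduces, by the same pullback argument, this estimate to $\mathrm{IC}(\mathfrak{q}^{S^n})\le n+2$. For the sphere, I would take $v_1,\dots,v_{n+2}\in S^n$ to be the vertices of a regular simplex inscribed in $S^n$ and centred at the origin, so that $\sum_i v_i=0$ and the $v_i$ span $\mathbb{R}^{n+1}$, and set $W_i=\{x\in S^n:\langle x,v_i\rangle<0\}$: each $W_i$ is an open hemisphere and so contains no antipodal pair, and the $W_i$ cover $S^n$ because $\langle x,v_i\rangle\ge 0$ for all $i$ would force all these inner products to vanish and hence $x=0$, contradicting $x\in S^n$. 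The main obstacle, in my view, is the upper bound step, where equivariance of $\varphi$ and freeness of the $G$-action on $F(\mathbb{R}^{n+1},\ell)$ must interact precisely to turn the index datum into an open cover trivialising $\mathfrak{q}^X$; this same mechanism then drives the $\mathbb{Z}_2$ reduction via $\psi$.
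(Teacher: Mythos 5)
Your proposal is correct and follows essentially the same route as the paper: the lower bound via Proposition~\ref{prop:bup-covering} applied to a putative $(n+1)$-fold open cover, the upper bound by pulling back an injective-restriction cover along the $G$-equivariant map $\varphi\colon X\to F(\mathbb{R}^{n+1},\ell)$ (which the paper packages as an application of Theorem~\ref{thm:pullback}), and the $\mathbb{Z}_2$ equality by passing to $S^n$ and exhibiting an $(n+2)$-fold antipode-free open cover, where your explicit hemispheres $W_i=\{x:\langle x,v_i\rangle<0\}$ are a clean self-contained substitute for the paper's appeal to Matou\v{s}ek's facet-projection construction. The only detail you omit is the degenerate case $\mathrm{ind}_2(X,G)=\infty$, which the paper disposes of in one sentence and which your lower-bound argument covers anyway by running it for every finite $m$.
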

\begin{proof}
Observe that, if $\mathrm{ind}_2(X,G)=\infty$, then $\mathrm{IC}(\mathfrak{q}^X)=\mathrm{IC}\left(\mathfrak{q}^{F(\mathbb{R}^{\infty},\ell)}\right)=\infty$ (by Proposition~\ref{prop:bup-covering} and note that, for cohomological reasons--analogously to \cite[Example 3.23, p. 41]{zapata2023}, $\left((F(\mathbb{R}^{\infty},\ell),G);\mathbb{R}^n\right)$ satisfies the $2$-BUP for any $n\geq 0$). Now, we suppose $n=\mathrm{ind}_2(X,G)<\infty$. We have that the triple $\left((X,G);\mathbb{R}^n\right)$ satisfies the $2$-th BUP. Suppose that there are $A_1,\ldots,A_{n+1}$ open subsets of $X$ such that $X=A_1\cup\cdots\cup A_{n+1}$. Then, by Proposition~\ref{prop:bup-covering}, there is at least one set $A_j$ containing $\{gx,g'x\}$ for some $x\in X$ and for some distinct $g,g'\in G$. Then the restriction map $\mathfrak{q}_{\mid A_j}:A_j\to X/G$ is not injective. Hence, we obtain $\mathrm{IC}(\mathfrak{q})\geq n+2=\mathrm{ind}_2(X,G)+2$. 
  
  On the other hand, we also have that there exists a $G$-equivariant map $\varphi:X\to F(\mathbb{R}^{n+1},\ell)$ and thus the following diagram: 
  \begin{eqnarray*}
\xymatrix{ X \ar[rr]^{\varphi} \ar[d]_{\mathfrak{q}^X} & & F(\mathbb{R}^{n+1},\ell)\ar[d]^{\mathfrak{q}^{F(\mathbb{R}^{n+1},\ell)}} & \\
       X/G \ar[rr]_{\overline{\varphi}} & & F(\mathbb{R}^{n+1},\ell)/G &}
\end{eqnarray*} is a pullback. It implies that $\mathrm{IC}(\mathfrak{q}^X)\leq \mathrm{IC}\left(\mathfrak{q}^{F(\mathbb{R}^{n+1},\ell)}\right)$ (by Theorem~\ref{thm:pullback}).

  \medskip  Now, for $G=\mathbb{Z}_2$, observe that there are $\mathbb{Z}_2$-equivariant maps $F(\mathbb{R}^{n+1},2)\to S^n$ and $S^n\to F(\mathbb{R}^{n+1},2)$, and thus $\mathrm{IC}\left(\mathfrak{q}^{F(\mathbb{R}^{n+1},2)}\right)=\mathrm{IC}(\mathfrak{q}^{S^n})$ (by Theorem~\ref{thm:pullback}). We will check the inequality $\mathrm{IC}(q^{S^n})\leq n+2$. From \cite[p. 24]{matousek2003}, we can conclude that there exists a covering of $S^n$ by open subsets $W_1,\ldots,W_{n+2}$ such that no $W_i$ contains a pair of antipodal points (to see this, we consider an $(n+1)$-simplex in $\mathbb{R}^{n+1}$ containing $0$ in its interior, and we obtain closed sets $F_1,\ldots,F_{n+2}$ by projecting the facets centrally from $0$ on $S^n$. Hence, for each $i=1,\ldots,n+2$, we consider $W_i$ as a small open neighbourhood of $F_i$ such that $W_i$ does not admit a pair of antipodal points). Hence, we see that the inequality $\mathrm{IC}(q^{S^n})\leq n+2$ is valid. 
\end{proof}

Theorem~\ref{thm:BU-implies-IC} provides a generalization of the Lyusternik-Schnirelmann-Borsuk Theorem to metric free $G$-spaces.

\begin{corollary}[Generalization of LSB Theorem for metric free $G$-spaces]
Let $X$ be a metric free $G$-space for $G$ a finite group of order $|G|>1$. Suppose $\mathrm{ind}_2(X,G)\geq n$. If $U_1,\ldots,U_{n+1}$ is an open cover of $X$, then there
is at least one set $U_j$ containing the set $\{gx,g'x\}$ for some $x\in X$ and for some distinct $g,g'\in G$.     
\end{corollary}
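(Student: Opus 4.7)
The plan is to deduce this corollary as a direct consequence of Theorem~\ref{thm:BU-implies-IC}, which itself rests on Proposition~\ref{prop:bup-covering}. The whole argument is essentially a translation between the injective-category formulation and the covering formulation, with no substantive new content.

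First, I would apply Theorem~\ref{thm:BU-implies-IC} to the hypothesis $\mathrm{ind}_2(X,G)\geq n$, obtaining
\[
\mathrm{IC}(\mathfrak{q}^X)\;\geq\;\mathrm{ind}_2(X,G)+2\;\geq\;n+2.
\]
By Definition~\ref{defn:injective-category}, this inequality means that no open cover of $X$ by $n+1$ open sets can have the property that $\mathfrak{q}^X$ is injective on each set. In particular, the given cover $U_1,\ldots,U_{n+1}$ must contain at least one $U_j$ on which $\mathfrak{q}^X_{\mid U_j}\colon U_j\to X/G$ fails to be injective, so there exist distinct $x_1,x_2\in U_j$ with $\mathfrak{q}^X(x_1)=\mathfrak{q}^X(x_2)$.

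Next I would translate this failure of injectivity into the orbit language demanded by the statement. Since $\mathfrak{q}^X$ identifies exactly the $G$-orbits, there is some $g\in G$ with $x_2=g x_1$, and $g\neq 1$ because $x_1\neq x_2$. Setting $x=x_1$ yields $\{1\cdot x,\,g\cdot x\}=\{x_1,x_2\}\subset U_j$ with the two group elements $1\neq g$ distinct, which is exactly the conclusion.

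I do not foresee any obstacle: the corollary is a one-line bookkeeping consequence of Theorem~\ref{thm:BU-implies-IC}. As an alternative route, one could bypass the injective-category machinery altogether and appeal directly to Proposition~\ref{prop:bup-covering}: since $n\leq \mathrm{ind}_2(X,G)$, the triple $\bigl((X,G);\mathbb{R}^n\bigr)$ satisfies the $2$-th BUP, and the chain of implications (a)$\Rightarrow$(b)$\Rightarrow$(c) from Proposition~\ref{prop:bup-covering} yields the open-cover statement at once. Both routes are repackagings of each other, and I would present the first one with a short remark pointing out the second.
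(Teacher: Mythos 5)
Your proposal is correct and matches the paper's intent: the paper states this corollary without proof, positioning it as an immediate consequence of Theorem~\ref{thm:BU-implies-IC}, which is precisely your first route (and your alternative via Proposition~\ref{prop:bup-covering} is just the internal mechanism of that theorem's proof). The translation from ``$\mathfrak{q}^X_{\mid U_j}$ not injective'' to ``$\{gx,g'x\}\subset U_j$ for distinct $g,g'$'' is handled correctly, since $\mathfrak{q}^X$ identifies exactly the $G$-orbits.
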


We have the following example.

\begin{example}\label{exam:sphere}
\noindent\begin{enumerate}
    \item[(1)] Let $n\geq 0$ and $\mathfrak{q}^{S^n}:S^n\to\mathbb{R}P^n$ be the natural quotient map. From the Borsuk-Ulam theorem, we have $\mathrm{ind}(S^n,A)=n$. Then, by Theorem~\ref{thm:BU-implies-IC}, we obtain $\mathrm{IC}(\mathfrak{q}^{S^n})=\mathrm{IC}\left(\mathfrak{q}^{F(\mathbb{R}^{n+1},2)}\right)=n+2$.      
    \item[(2)] When $S^\infty$ is the infinite dimensional sphere together with the antipodal involution, we have $\mathrm{ind}(S^\infty,A)=\infty$ (see \cite[Example 3.23, p. 41]{zapata2023}). Hence, by Theorem~\ref{thm:BU-implies-IC}, we conclude $\mathrm{IC}(\mathfrak{q}^{S^\infty})=\infty$. 
\end{enumerate} 
\end{example}

Example~\ref{exam:sphere}(1) improves the lower bound obtained in Example~\ref{exam:lower-quotient}. On the other hand, note that any point in $\mathbb{R}P^n$ is a 2th point of $\mathfrak{q}^{S^n}:S^n\to\mathbb{R}P^n$. Example~\ref{exam:sphere}(1) also shows that the finite condition on the number of multiple points of the Corollary~\ref{cor:only-type} cannot be relaxed.

\medskip Also, Theorem~\ref{thm:BU-implies-IC} implies the following result.

\begin{corollary}\label{cor:for-conf}
 Let $G$ be a finite group of order $\ell:=|G|>1$. We have \[\mathrm{ind}_2\left(F(\mathbb{R}^{n+1},\ell),G\right)+2\leq \mathrm{IC}\left(\mathfrak{q}^{F(\mathbb{R}^{n+1},\ell)}\right).\] The equality holds whenever $G=\mathbb{Z}_2$.   
\end{corollary}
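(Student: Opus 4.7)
The plan is to apply Theorem~\ref{thm:BU-implies-IC} directly to the metric free $G$-space $X := F(\mathbb{R}^{n+1},\ell)$ itself. First I would verify that $F(\mathbb{R}^{n+1},\ell)$ satisfies the hypotheses of the theorem: it is metric (as a subspace of $(\mathbb{R}^{n+1})^\ell$), and the left $G$-action defined in~(\ref{eq:action}) restricts to a free action on $F(\mathbb{R}^{n+1},\ell)$, as recalled in the paragraph following that display. So the theorem applies to this choice of $X$.

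Next, set $m := \mathrm{ind}_2\bigl(F(\mathbb{R}^{n+1},\ell),G\bigr)$. Note that $m$ need not equal $n$ in general; nevertheless, plugging $X=F(\mathbb{R}^{n+1},\ell)$ into the left-hand inequality of Theorem~\ref{thm:BU-implies-IC} immediately yields
\[
m + 2 \;\leq\; \mathrm{IC}\bigl(\mathfrak{q}^{F(\mathbb{R}^{n+1},\ell)}\bigr),
\]
which is precisely the claimed inequality of the corollary.

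For the sharpness statement in the case $G=\mathbb{Z}_2$, I would invoke the second half of Theorem~\ref{thm:BU-implies-IC}, which asserts that when $G=\mathbb{Z}_2$ the displayed chain of inequalities collapses to equalities. Applied with $X = F(\mathbb{R}^{n+1},2)$, this forces $m + 2 = \mathrm{IC}\bigl(\mathfrak{q}^{F(\mathbb{R}^{n+1},2)}\bigr)$, yielding the equality claim. There is no genuine obstacle here; the only thing to be slightly careful about is the notational clash between the dimension parameter $n$ appearing in the corollary and the letter $n$ used to denote the index in the statement of Theorem~\ref{thm:BU-implies-IC}, which I would resolve by renaming the latter to $m$ as above.
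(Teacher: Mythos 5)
Your proposal is correct and is exactly the paper's intended argument: the corollary is stated as an immediate consequence of Theorem~\ref{thm:BU-implies-IC} applied to the metric free $G$-space $X=F(\mathbb{R}^{n+1},\ell)$, with the equality for $G=\mathbb{Z}_2$ coming from the equality clause of that theorem. Your care in checking the hypotheses (metric, free action) and in renaming the index to $m$ to avoid the clash with the dimension parameter $n$ is appropriate but does not change the route.
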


 We present some connections of this work with recent research.

\begin{remark}
Let $X$ be a free $G$-space and $\mathfrak{q}^X:X\to X/G$ be the the quotient map. \begin{enumerate}
    \item[(1)] In \cite{zapata-dac}, Zapata and Gon\c{c}alves show a connection between the $q$-th index and sectional category theory. For instance, in \cite{zapata2023}, as stated in version 4 of the Arxiv version, or \cite{zapata-dac}, the authors provide that the index of $(X,\tau)$ coincides with the sectional category of $\mathfrak{q}^X$ minus 1 for any paracompact space $X$. Then, by Theorem~\ref{thm:BU-implies-IC}, we see that the injective category number coincides with the sectional category plus one. Recall that any metric space is paracompact (by the A.H. Stone’s Theorem). 
        \item[(2)] In \cite[Definition 6.2.3, p. 150]{matousek2003}, the author defined a different notion of $G$-index, given by $\mathrm{ind}_G(X):=\min\{d:~\text{there exists a $G$-map } X\to E_dG\}$. Here $E_dG$ is a finite classifying space of dimension $d$ for $G$ (in the sense of \cite[Definition 2.4, p. 1380]{martinez2024} or \cite[Definition 6.2.1, p. 149]{matousek2003}). Since $F(\mathbb{R}^{d+1},\ell)$ is $(d-1)$-connected where $\ell:=|G|>1$, there exists a $G$-map $E_dG\to F(\mathbb{R}^{d+1},\ell)$ (by \cite[Lemma 6.2.2, p. 150]{matousek2003}). Hence, we observe that the following inequality \[\mathrm{ind}_2(X,G)\leq\mathrm{ind}_G(X)\] always holds and the equality holds whenever $G=\mathbb{Z}_2$. This $\mathrm{ind}_G(X)$ has been used in \cite[Theorem 3.4, p. 1383]{martinez2024} to estimate $\mathrm{IC}(\mathfrak{q}^X)$ (observe that $\mathrm{IC}(\mathfrak{q}^X)$ coincides with the notion of open $G$-covering number of $X$ presented in \cite[Definition 3.2, p. 1383]{martinez2024}). Specifically, the author shows that the following inequalities \begin{equation}\label{eq:martinez}
            k+|G|\leq \mathrm{IC}(\mathfrak{q}^K)\leq k\left(|G|-1\right)+2
        \end{equation} always hold for any finite group $G$ and any geometric $G$-simplicial complex $K$ with $k:=\mathrm{ind}_G(K)\geq 1$. The gap between \cite[Theorem 3.4, p. 1383]{martinez2024} and Theorem~\ref{thm:BU-implies-IC} is real because metric free $G$-spaces may not be modeled simplicially. In particular, we cannot apply inequalities~(\ref{eq:martinez}) for the $G$-spaces $F(\mathbb{R}^{d+1},\ell)$ with $\ell=|G|$. Also, in \cite[Conjecture 3.5, p. 1383; Conjecture 3.6(2), p. 1384]{martinez2024} was conjectured that $\mathrm{IC}(\mathfrak{q}^{E_dG})=|G|+d$ and $\mathrm{IC}(\mathfrak{q}^{E_dG})<\mathrm{IC}(\mathfrak{q}^{E_{d+1}G})$ hold for any finite group $G$ and all $d\geq 0$. On the other hand, note that $\mathrm{IC}(\mathfrak{q}^{E_dG})\leq \mathrm{IC}(\mathfrak{q}^{F(\mathbb{R}^{d+1},\ell)})$ (by Theorem~\ref{thm:pullback}), and that the equality holds whenever $G=\mathbb{Z}_2$.    
\end{enumerate} 
\end{remark}

Finally, we propose the following future work.

\begin{remark}[Future Work]\label{rem:future-work}
 \noindent\begin{enumerate}
     \item[(1)] As mentioned before the Example~\ref{exam:coho-restrci-non-injective}, in \cite[Theorem 4]{MR4753913} the authors present a non-injective polynomial local diffeomorphism  $g:\mathbb{R}^2\to\mathbb{R}^2$ such that $g$ is surjective. Compute $\mathrm{IC}(g)$.  
     \item[(2)]  Compute $\mathrm{ind}_G(F(\mathbb{R}^{d+1},\ell))$, $\mathrm{ind}_2(F(\mathbb{R}^{d+1},\ell),G)$ and $\mathrm{IC}(\mathfrak{q}^{F(\mathbb{R}^{d+1},\ell)})$ for any finite group $G$ with order $\ell:=|G|\geq 3$. Observe that, by Definition~\ref{defn-index}, the inequality $\mathrm{ind}_2(F(\mathbb{R}^{d+1},\ell),G)\leq d$ always holds. 
 \end{enumerate}   
\end{remark}



\bibliographystyle{plain}

\end{document}